\documentclass[11pt,a4paper]{article}

\usepackage{amsmath,amsthm,amsfonts}
\usepackage{graphicx}
\usepackage{color}
\usepackage{paralist}

\usepackage{hyperref}
\hypersetup{%
plainpages=false,
colorlinks,urlcolor=blue,linkcolor=blue,citecolor=blue,
pdftitle={},
pdfauthor={James Cruise and Stan Zachary},
pdfsubject={Optimal scheduling of energy storage resources},
pdfpagemode={UseOutlines},
pdfpagelayout={OneColumn},
pdfstartview={Dubhe}
}

\newtheorem{theorem}{Theorem}
\theoremstyle{remark}
\newtheorem{remark}{Remark}
\newtheorem{example}{Example}

\newcommand{\E}{\mathbf{E}}

\newcommand{\Rp}{\mathbb{R}_+}

\renewcommand{\S}{S}
\newcommand{\Sne}{S_{ne}}
\newcommand{\Se}{S_e}

\newcommand{\rt}{r}
\newcommand{\Pb}{\ensuremath{\mathbf{P}}}

\newcommand{\lole}{\ensuremath{\mathrm{LOLE}}}
\newcommand{\eeu}{\ensuremath{\mathrm{EEU}}}

\newcommand{\efc}{\ensuremath{\mathit{efc}}}

\newcommand{\ltf}{LRTF}

\topmargin  0cm
\headheight 0cm
\headsep 0cm
\textheight 24 cm
\textwidth 14 cm
\parindent 0 pt
\parskip\smallskipamount
\oddsidemargin 0.5 cm
\evensidemargin 0.5 cm

\parindent 0pt
\parskip \smallskipamount

\title{Optimal scheduling of energy storage resources}
\author{James Cruise\footnote{Heriot-Watt University.  Research
    supported by EPSRC grant EP/I017054/1}
  \ and Stan Zachary\footnote{University of Edinburgh.  Research
    supported by EPSRC grant EP/I017054/1 and by a grant from the
    Simons Foundation}}
\date{\today}

\begin{document}

\maketitle

\begin{abstract}
  It is likely that electricity storage will play a significant role
  in the balancing of future energy systems.  A major challenge is
  then that of how to assess the contribution of storage to capacity
  adequacy, i.e.\ to the ability of such systems to meet demand.  This
  requires an understanding of how to optimally schedule multiple
  storage facilities.  The present paper studies this problem in the
  cases where the objective is the minimisation of expected energy
  unserved (EEU) and also when it is a form of weighted EEU in which
  the unit cost of unserved energy is higher at higher levels of unmet
  demand.  We also study how the contributions of individual stores
  may be identified for the purposes of their inclusion in electricity
  capacity markets.
\end{abstract}

\section{Introduction}
\label{sec:introduction}

In electricity systems there is a need to keep supply and demand
carefully balanced at all times.  However, the increasing penetration
of renewable generation means that future systems are likely to
characterised by much greater variability and uncertainty on the
supply side, while patterns of demand will continue to vary
considerably according to the time of day.  It is thus likely that
electricity storage will play a significant role in balancing such
future systems---see~\cite{Strbacetal} and, for a recent analysis of
the impact of storage in European markets, see~\cite{NSPN}.  The
problem of the optimal operation and control of storage may be viewed
in several ways.  For the storage operator much of the value of
storage may be realised in price arbitrage, i.e.\ in buying
electricity when it is cheap and selling it when it is expensive
(see~\cite{CFGZ,CFZ,SDJW} and the references therein) and in the
provision of buffering and ancillary services
(see~\cite{BGK,CZ,DEKM,GTL,PADS}).  See also the literature review on
the management of energy storage systems by Weitzel and Glock
\cite{WEITZEL2018582}.  However, for society and for the electricity
system operator, a major concern is that of \emph{capacity adequacy},
i.e.\ of ensuring that there is sufficient available supply to be able
to meet demand in all but the most exceptional circumstances.  Here,
as indicated above, storage facilities may be used to cover periods of
what would otherwise be shortfall in supply, and it is necessary to
assess the contributions of such storage facilities and to value them
individually for inclusion in capacity markets.  For example, in Great
Britain such valuation of individual storage facilities is now
considered by the system operator, National Grid plc, in its annual
electricity capacity reports, see~\cite{NGECR18}.

It is typically the case that continuous periods of what would
otherwise be shortfall and which are to be covered by storage are well
separated in time, so that stores may be fully recharged between such
periods.  (In Great Britain and in many other countries, for example,
the system is presently only significantly at risk during a single
evening peak period, and there is ample time for overnight
recharging.)  Thus in analysing the contributions of given storage
facilities, it is typically sufficient to consider single periods of
possible shortfall during which no recharging of stores may take
place.  However, within any such analysis there now arises the problem
of the optimal scheduling of multiple stores, with the objective of
minimising, for example, unserved energy.  This problem arises because
individual stores are subject to both capacity and rate constraints,
and there is a danger that, if the rates at which these stores are
used are not well coordinated over time, one may arrive at some time at
which there is sufficient remaining energy in the stores to meet
future needs, but that this energy is contained in too few stores
which cannot between them supply that energy at the required rates.

As discussed above, it is only recently that the problem of optimal
scheduling of multiple stores has become important in the analysis of
electricity capacity adequacy, and perhaps for this reason it does not
appear to have received much attention in the literature. There have
been a small number of studies using either dynamic programming or
Monte Carlo simulations, see \cite{KHAN201839, Sioshansi2014,
  ZHOU201512}.  However, we take a more analytic and direct approach
to the problem, as does recent work by Evans \emph{et al.}~\cite{EAT},
which we discuss in Section~\ref{sec:stor-demand-prof}.

The present paper further considers this optimal scheduling problem.
In Section~\ref{sec:model} we present the underlying model, which is
that of a nonnegative \emph{demand process} over some given continuous
period of time, together with \emph{set of stores}, each with given
rate and capacity constraints, which may not be recharged during that
period of time and which are to be used to serve that demand process
as far as possible.  The demand process might be a process of
remaining energy demand after initial demand had been met as far as
possible from sources such as generation.  This process may be
sufficiently well known in advance as to be capable of being modelled
as deterministic.  Alternatively it may be necessary to treat it as
stochastic.  We consider also possible objective functions for the
scheduling of the stores.  In Section~\ref{sec:stor-demand-prof} we
give a simple necessary and sufficient condition for a given demand
process to be completely satisfiable by a given set of stores as
described above.  In Section~\ref{sec:minimisation-eeu} we study the
problem of scheduling stores for the minimisation of \emph{expected
  energy unserved} (EEU).  It turns out that in this case the optimal
decision at any time may be made independently of the demand process
subsequent to that time, so that it is possible to construct a
solution which is optimal in both a deterministic and a stochastic
environment.  In Section~\ref{sec:contr-indiv-stor}, we consider the
contribution to individual stores to the minimisation of EEU.  This is
particularly important in the implementation of electricity capacity
markets, where capacity-providing resources of different types are in
competition with each other---see, e.g.~\cite{NGECR18}.  In
Section~\ref{sec:minim-weight-eeu-1} we study the important variation
of the scheduling problem in which the cost of unserved energy is
higher at higher levels of unmet demand; here in a stochastic
environment the solution of the problem is much more difficult.
Finally, in Section~\ref{sec:examples-1} we give a number of
illustrative examples.

\section{Model}
\label{sec:model}

In this section we give our model for a demand process over some given
period of time, together with a set of stores whose energy is to be
used to meet that demand as far as possible in accordance with some
criterion.  (As previously remarked, the demand process corresponds to
that energy demand which requires specifically to be met from
storage.)  Thus the model is defined by the following two components:
\begin{compactenum}[(a)]%
\item a nonnegative \emph{demand process} $(d(t),\,t\in[0,T])$ defined
  over some continuous period of time~$[0,T]$; this demand process may
  be deterministic or stochastic;
\item a set~$\S$ of \emph{stores}, where each store~$i\in\S$ is
  characterised by its rate (power) constraint~$P_i$ and capacity
  (energy) constraint~$E_i$; these are such that the store~$i$ may
  serve energy at any rate~$r_i(t)$ for each time~$t\in[0,T]$ subject
  to the constraints
  \begin{equation}
    \label{eq:1}
    0 \le r_i(t) \le P_i, \qquad t \in [0,T],
  \end{equation}
  and
  \begin{equation}
    \label{eq:2}
    \int_0^T r_i(t)\,dt \le E_i.
  \end{equation}
\end{compactenum}
Thus it is assumed in particular, from~\eqref{eq:2}, that stores may
not recharge energy during the time period~$[0,T]$.

Define also $r_{\S}(t)=\{r_i(t),\,i\in\S\}$ to be the set of rates at
which energy is served by the stores in~$\S$ at each time~$t\in[0,T]$.
We shall say that a \emph{policy} for the management of the set~$\S$
of stores over the time period $[0,T]$ is a process
$(r_{\S}(t),\,t\in[0,T])$ of such rates such that the
constraints~\eqref{eq:1} and~\eqref{eq:2} are satisfied and
additionally, without loss of generality, is such that
\begin{equation}
  \label{eq:3}
  \sum_{i\in\S} r_i(t) \le d(t), \qquad t\in[0,T].
\end{equation}
When the demand process $(d(t),\,t\in[0,T])$ is deterministic we
require that any policy $(r_{\S}(t),\,t\in[0,T])$ for the use of the
stores as above should also be deterministic.  When the demand process
$(d(t),\,t\in[0,T])$ is stochastic then we allow that, at each
time~$t$, the set of rates~$r_{\S}(t)$ may depend on the realised
value $(d(u),\,u\in[0,t])$ of the demand process to time~$t$.

Given any policy $(r_{\S}(t),\,t\in[0,T])$ for the management of the
stores as above, let the function~$w$ defined on the positive real
line be such that $w(d)$ is the cost per unit time associated with
a given level~$d$ of the \emph{residual demand process}
$(d(t) - \sum_{i\in\S} r_i(t),\,t\in[0,T])$.  Then the problem of
optimally scheduling the use of the stores typically becomes:
\begin{compactitem}
\item[\Pb:] choose a policy $(r_\S(t),\,t\in[0,T])$ to
  \emph{minimise}
  \begin{equation}
    \label{eq:4}
    \E\int_0^T w\biggl(d(t) - \sum_{i\in\S} \rt_i(t)\biggr)\,dt,
  \end{equation}
\end{compactitem}
where~$\E$ denotes the expectation operator.  (The latter is only
required when the demand process $(d(t),\,t\in[0,T])$ is stochastic.)
The case $w(d)=d$ corresponds to the minimisation of \emph{expected
  energy unserved} (EEU).  A complete solution to the optimal
scheduling problem in this case, valid in both deterministic and
stochastic environments, is given in
Section~\ref{sec:minimisation-eeu}.

However, it is often the case that the unit cost of unserved energy is
higher at higher levels of unmet demand; for example, modest levels of
unmet demand may often be dealt with by actions such as voltage
reduction, while, at the other extreme, the highest levels of unmet
demand may result in major blackouts and consequent societal
disruption.  Thus it may sometimes be natural to consider a
function~$w$ which, instead of being linear, is more generally
increasing and convex with $w(0)=0$.  Here the optimal
scheduling problem~\Pb\ corresponds to the minimisation of a form of
weighted EEU in which the marginal cost of unserved energy is an
increasing function of the level of residual demand.  The solution
to this problem is considerably more complicated and is discussed in
Section~\ref{sec:minim-weight-eeu-1}.

\section{Storage and demand profiles}
\label{sec:stor-demand-prof}

In this section we consider the question of whether any given demand
process $(d(t),\,t\in[0,T])$
may be completely satisfied by some given set~$\S$ of stores, i.e.\
whether there exists a policy $(r_\S(t),\,t\in[0,T])$ (satisfying the
constraints~\eqref{eq:1} and~\eqref{eq:2}) such that
\begin{equation}
  \label{eq:5}
  \sum_{i\in\S}r_i(t) = d(t), \qquad t \in [0,T].
\end{equation}
We give a simple, and readily testable, necessary and sufficient
condition for this to be possible.  This makes it easy to check ahead
of time whether a given set of stores is adequate to meeting a given
demand process, and, if necessary, to appropriately adjust that set.
Central to establishing the sufficiency of the condition is the use of
a particular policy for the prioritisation of the use of stores at any
time.  Let~$E_i(t)$ be the residual energy in each store $i\in\S$ at
each time~$t$; note that $E_i(0)=E_i$; define also the \emph{residual
  time} of each store~$i$ at each time~$t$ as $E_i(t)/P_t$ (this is
the length of further time for which the store~$i$ could supply energy
at its maximum rate).  Suppose that at time~$t$ it is desired to serve
energy at a total rate~$\hat d(t)$, where
$\hat d(t)\le\sum_{i\in\S\colon E_i(t)>0} P_i$ (the rate~$\hat d(t)$
might be the demand $d(t)$ to be met at time~$t$ or might be some
lesser rate); group the stores according to their residual
times~$E_i(t)/P_i$ at that time (so that two stores belong to the same
group if and only if their residual times are equal); rank the groups
in \emph{descending order} of their residual times, and select in this
order just sufficient groups of stores such that, using these stores
at their maximum rates (i.e.\ each selected store~$i$ is used at a
rate $r _t(t)=P_i$), the required total rate~$\hat d(t)$ is met; the
exception to this rule is that, in order to exactly meet the total
rate~$\hat d(t)$, the stores in the \emph{last} group thus selected
may only require to be used at some fraction~$\lambda$ of their
maximum rates, so that each store~$i$ in this last group is required
to supply energy at rate~$r_i(t)=\lambda P_i$ for some
common~$\lambda\le1$.  We shall refer to a policy in which, at each
time~$t$, the use of stores is prioritised as above as a \emph{longest
  residual time first} (\ltf) policy.  It is uniquely determined given
the total rate~$\hat d(t)$ at which energy is served at each
time~$t\in[0,T]$.  This policy is also considered in~\cite{EAT} which
shows that any process $(\hat d(t),\,t\in[0,T])$ which can be entirely
satisfied by a given set of stores can be satisfied by supplying
energy via the corresponding \ltf\ policy.  Thus in particular we may
check whether a given demand process $(d(t),\,t\in[0,T])$ may be thus
satisfied by simply checking numerically whether the \ltf\ policy
succeeds in doing so.  However, a simpler and more readily checkable
condition is desirable.

The effect of any \ltf\ policy, as time~$t$ progresses, is to
gradually equalise the residual lifetimes~$E_i(t)/P_i$ over stores;
further once the residual lifetimes of any set of stores have become
equal they remain so.  As also noted in~\cite{EAT}, this latter
property has the immediate consequence of preserving through time the
ordering of stores by their residual times, i.e.\ under an \ltf\
policy, for any pair of stores $i,j\in\S$ and for any time~$t$,
\begin{equation}
  \label{eq:6}
  E_i(t)/P_i\ge E_j(t)/P_j
  \quad\Rightarrow\quad
  E_i(t')/P_i\ge E_j(t')/P_j \quad\text{for all times~$t'>t$}.
\end{equation}
Suppose now that a given demand process $(d(t),\,t\in[0,T])$ is served
via the use of the unique \emph{greedy} \ltf\ policy, where by this it
is meant that the total rate $\sum_{i\in\S} \rt_i(t)$ at which demand
is served at each time~$t$ is given by
\begin{equation}
  \label{eq:7}
  \sum_{i\in\S} \rt_i(t) = \min\Biggl(d(t),\sum_{i\in\S:E_i(t)>0}P_i\Biggr),
\end{equation}
and this demand is then served using the corresponding \ltf\ policy.
(Note that this policy is available whether the demand process is
deterministic or stochastic, the latter since the rates~$\rt_{\S}(t)$
at each time~$t$ do not depend on the demand process subsequent to
time~$t$.)  Let~$\Se$ be the set of stores whose capacity constraints
are strictly binding under the greedy \ltf\ policy, i.e.\ the set of
stores which not only empty by time~$T$ but do so strictly prior to
the time $T'=\sup\{t\le T: d(t)>\sum_{i\in\S:E_i(t)>0}P_i\}$; define
also $\Sne=\S\setminus\Se$ to be the complementary set.
It is a immediate consequence
of the definition of the greedy \ltf\ policy---in particular
of~\eqref{eq:6} and~\eqref{eq:7}---that, again under this policy, the
use of the stores in the set $\Sne$ is prioritised over those in the
set $\Se$ throughout the \emph{entire} time period~$[0,T]$ so that,
for all~$t\in[0,T]$,
\begin{equation}
  \label{eq:8}
  \sum_{i\in\Sne} \rt_i(t) =  \min\left(d(t),\sum_{i\in\Sne} P_i\right).
\end{equation}

We now consider whether any given demand process (which we here treat
as if it were known in advance) may be totally satisfied by a given
set of stores.  In doing so it is clear that we may reorder the
succession of time instants within the period~$[0,T]$ so that this
demand process is (weakly) decreasing over time.  Thus, given any
demand process $(d(t),\,t\in[0,T])$, it is convenient to define its
corresponding \emph{demand profile} $(d^*(t),\,t\in[0,T])$ (often
referred to in the engineering literature as the \emph{load duration
  curve}, see, e.g.~\cite{BillAll}) as the given demand process
reordered as above, i.e.\ $(d^*(t),\,t\in[0,T])$ is the unique
nonincreasing nonnegative process on $[0,T]$ such that, for all $d'\ge0$,
\begin{equation}
  \label{eq:9}
  m(\{t: d^*(t) \le d'\}) = m(\{t: d(t) \le d'\}),
\end{equation}
where the function~$m$ applied to any set of times within the
interval~$[0,T]$ defines the total length of that set.%
\footnote{Formally, the function~$m$ denotes Lebesgue measure, and
  there is an assumption that any demand process is Lebesgue
  measurable.  Since, in applications, time is usually considered as a
  succession of discrete intervals on each of which everything is
  constant, there are no practical difficulties here.}

Similarly, given the set~$\S$ of stores as described in
Section~\ref{sec:model}, define its \emph{storage profile} as the
nonnegative function~$(s_{\S}(t),t\ge0)$ defined on the positive
half-line, such that, for all~$t\ge0$, $s_{\S}(t)$ is the rate at which
energy is supplied at time~$t$ when, starting at time~$0$, every store
is discharged continuously at its maximum rate, i.e.\
\begin{equation}
  \label{eq:10}
  s_{\S}(t) = \sum_{i\in\S\colon E_i/P_i \ge t} P_i.
\end{equation}
For example, in the case of a single store with capacity
constraint~$E$ and rate constraint~$P$, its storage profile is the
function~$s_{\S}$ given by $s_{\S}(t)=P$ for $t\in[0,E/P]$ and
$s_{\S}(t)=0$ for $t>E/P$.  Note that storage profiles are
\emph{additive}, i.e.\ the storage profile of the union of two
disjoint sets of stores is the sum of their individual storage
profiles.  In general, the storage profile~$(s_{\S}(t),t\ge0)$ of a
set~$\S$ of stores is (weakly) decreasing, and it is clear (and is
formally a consequence of Theorem~\ref{thm:nsc} below) that, in terms
of the model of the present paper, two sets of stores have equivalent
capabilities if and only if their storage profiles are the same.
Notably, in the case of a single store as above, a given demand
process $(d(t),\,t\in[0,T])$ may be completely served by that store if
and only if $d(t)\le P$ for all $t\in[0,T]$ and
$\int_0^T d(t)\,dt \le E$.  A demand process satisfying these
conditions may similarly be served by a set~$\S$ of stores such that
$E_i/P_i$ is the same for all~$i\in\S$, and $\sum_{i\in\S}P_i=P$ and
$\sum_{i\in\S}E_i=E$: it is only necessary to ensure that at every
time all stores are used at rates proportional to their maximal rates,
so that the residual lifetimes $E_i(t)/P_i$ are kept equal at all
times~$t$ and so no store empties before the final time~$T$.  The
following result generalises this observation and gives a simple
necessary and sufficient condition for a given demand process to be
capable of being satisfied by a given set of stores.

\begin{theorem}
  \label{thm:nsc}
  A given demand process $(d(t),\,t\in[0,T])$ may be completely
  satisfied by a given set~$\S$ of stores if and only if, for all
  $t\in[0,T]$,
  \begin{equation}
    \label{eq:11}
    \int_0^t s_{\S}(u)\,du \ge \int_0^t d^*(u)\,du,
  \end{equation}
  where, as defined above, $(d^*(t),\,t\in[0,T])$ is the demand
  profile (load duration curve) corresponding to the given demand
  process.
\end{theorem}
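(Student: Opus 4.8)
The plan is to prove the two implications separately, treating necessity by a direct rearrangement argument and sufficiency by showing that the greedy \ltf\ policy of~\eqref{eq:7} never falls short. Throughout I write $D(t)=\int_0^t d^*(u)\,du$ and $S(t)=\int_0^t s_{\S}(u)\,du$, and note at the outset the elementary identity $S(t)=\sum_{i\in\S}\min(P_it,E_i)$, which follows on integrating~\eqref{eq:10}.

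For necessity, suppose a policy $(r_{\S}(t),\,t\in[0,T])$ satisfies $\sum_{i\in\S}r_i(t)=d(t)$ for almost all~$t$. The quantity $\sum_{i\in\S}\min(P_it,E_i)$ is exactly the maximum energy the stores can jointly deliver over any time set of total measure~$t$, since over such a set store~$i$ delivers $\int_A r_i\le\min(P_i\,m(A),E_i)$ by~\eqref{eq:1} and~\eqref{eq:2}. I would combine this with the standard Hardy--Littlewood property of the decreasing rearrangement, namely $D(t)=\sup\{\int_A d(u)\,du: m(A)=t\}$, the supremum being attained on a suitable super-level set of~$d$. Choosing such an~$A$ of measure~$t$ and invoking complete service gives $D(t)=\int_A d(u)\,du=\sum_{i\in\S}\int_A r_i(u)\,du\le\sum_{i\in\S}\min(P_it,E_i)=S(t)$, which is~\eqref{eq:11}.

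For sufficiency I would first observe that feasibility is invariant under any measure-preserving rearrangement of the time axis: applying the same rearrangement simultaneously to every~$r_i$ preserves both~\eqref{eq:1} (a pointwise bound) and~\eqref{eq:2} (an integral), so it suffices to treat the case $d=d^*$ nonincreasing and to exhibit a feasible policy. I would run the greedy \ltf\ policy and argue by contradiction that it leaves no demand unserved. Letting $a(t)=\sum_{i\in\S\colon E_i(t)>0}P_i$ be the power available under the policy, suppose the unserved set $U=\{t: d^*(t)>a(t)\}$ has positive measure, and fix a shortfall time $\sigma\in U$ with $m(U\cap[0,\sigma])>0$; such a~$\sigma$ exists just above $\sup\{s: m(U\cap[0,s])=0\}$. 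Let $B$ be the stores nonempty at~$\sigma$ and $A=\S\setminus B$ those already empty at~$\sigma$.

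The key step, which I expect to be the main obstacle, is to show that every store in~$B$ is discharged at its full rate~$P_i$ throughout~$[0,\sigma]$. This rests on the order-preservation property~\eqref{eq:6}: the empty set~$A$ is a down-set and~$B$ an up-set in the ordering by residual time, so the stores of~$B$ carry the longest residual times and are used first under \ltf. Since $d^*$ is nonincreasing, for every $t\le\sigma$ one has $d^*(t)\ge d^*(\sigma)>a(\sigma)=\sum_{i\in B}P_i$, so demand always exceeds the total power of~$B$ and \ltf\ consequently runs all of~$B$ at full rate. Hence each $i\in B$ satisfies $E_i/P_i>\sigma$ while each $i\in A$, being empty by~$\sigma$, satisfies $E_i/P_i\le\sigma$, giving $S(\sigma)=\sum_{i\in A}E_i+\sigma\sum_{i\in B}P_i$; but this is precisely the total energy drawn from the stores on~$[0,\sigma]$ ($A$ fully drained, $B$ at full rate), which equals the energy served and is therefore at most $D(\sigma)$. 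Thus $S(\sigma)\le D(\sigma)$, and with the hypothesis~\eqref{eq:11} this forces $S(\sigma)=D(\sigma)$, i.e.\ full service on~$[0,\sigma]$, contradicting $m(U\cap[0,\sigma])>0$. Hence $U$ is null, the greedy \ltf\ policy serves~$d^*$ completely, and sufficiency follows. The measure-theoretic selection of~$\sigma$ is a minor technicality that disappears entirely in the discrete-time setting of the applications.
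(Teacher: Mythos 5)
Your proof is correct, and its sufficiency half takes a genuinely different route from the paper's. For necessity the paper reduces to the profile $d^*$ and simply observes that $\int_0^t s_{\S}(u)\,du$ is the maximal energy servable in any interval of length~$t$; your Hardy--Littlewood super-level-set argument makes the same point directly on the original process~$d$, which is marginally more careful. For sufficiency both proofs run the greedy \ltf\ policy, but the paper runs it on the original process~$d$ and argues directly rather than by contradiction: it splits~$\S$ into the set~$\Se$ of stores whose capacity constraints are strictly binding and its complement~$\Sne$, uses the prioritisation property~\eqref{eq:8} to identify the residual demand left to~$\Se$ as $d_e(t)=\max(0,d(t)-\hat P)$ with $\hat P=\sum_{i\in\Sne}P_i$, and then a chain of integral inequalities derived from~\eqref{eq:11} shows the initial energy of~$\Se$ is at least $\int_0^T d_e(t)\,dt$; since those stores empty by~$T$ and energy is never served in excess of demand, the residual demand is fully met. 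You instead localise at an essentially first shortfall time~$\sigma$, split the stores into those empty and nonempty at~$\sigma$, and use order preservation~\eqref{eq:6} together with monotonicity of~$d^*$ to show the nonempty stores ran at full power throughout~$[0,\sigma]$, whence $S(\sigma)\le D(\sigma)$, contradicting~\eqref{eq:11} unless the shortfall set is null; this local accounting is self-contained and arguably more elementary, whereas the paper's $\Se$/$\Sne$ decomposition has the advantage of being reused later (Theorems~\ref{thm:min_eeu} and~\ref{thm:deriv}). One small caveat: by working with~$d^*$ you need to transfer feasibility from~$d^*$ back to~$d$, which requires a measure-preserving map realising $d$ as a rearrangement of~$d^*$ (a Ryff-type theorem) that you assert rather than prove; the paper's version avoids this by manipulating only distributional identities such as $d_e^*(t)=\max(0,d^*(t)-\hat P)$ --- though since the paper itself asserts the $d\leftrightarrow d^*$ equivalence without proof, you are at the same level of rigour, and as you note the issue vanishes in discrete time.
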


\begin{proof}
  As discussed above, the demand process $(d(t),\,t\in[0,T])$ may be
  satisfied by the set~$\S$ of stores if and only if its corresponding
  demand profile $(d^*(t),\,t\in[0,T])$ may be so satisfied.  For
  the latter the energy to be served in any time interval~$[0,t]$ is
  given by the integral on the right side of~\eqref{eq:11}.  That it is
  necessary that the condition~\eqref{eq:11} holds for all $t\in[0,T]$
  now follows from the observation that, for each such~$t$, the
  integral on the left side of~\eqref{eq:11} is the maximum energy
  which can be served by the stores in the time interval~$[0,t]$.

  To show sufficiency suppose now that the condition~\eqref{eq:11}
  holds for all $t\in[0,T]$.  Suppose also that that the original
  demand process $(d(t),\,t\in[0,T])$ is served (as far as possible)
  using the greedy \ltf\ policy.  
  It follows from the earlier observation~\eqref{eq:8} that, at each
  time~$t\in[0,T]$, the total rate at which energy is then served by
  the stores in the set~$\Sne$ defined above is given by
  $\min(d(t),\hat P)$, where $\hat P=\sum_{i\in\Sne}P_i$.  Thus the
  residual demand process $(d_e(t),\,t\in[0,T])$ to be served by the
  stores in the complementary set~$\Se$ is given by
  $d_e(t)=\max(0,d(t)-\hat P)$ for all~$t\in[0,T]$.  Since, for
  each~$t$, $d_e(t)$ is an increasing function of $d(t)$, the residual
  \emph{demand profile} corresponding to the residual demand process
  $(d_e(t),\,t\in[0,T])$ is similarly given by
  \begin{equation}
    \label{eq:12}
    d_e^*(t)=\max(0,d^*(t)-\hat P), \qquad t\in[0,T],
  \end{equation}
  while the storage profile defined by the stores in the residual
  set~$\Se$ is given by
  \begin{equation}
    \label{eq:13}
    s_{\Se}(t)=\max(0,s_{\S}(t)-\hat P), \qquad t \ge 0.
  \end{equation}
  Let~$T'=\sup\{t\colon d^*(t)\ge \hat P\}$; note that $T'\le T$.  Then
  \begin{align*}
    \int_0^T s_{\Se}(t)\,dt
    & \ge \int_0^{T'} s_{\Se}(t)\,dt\\
    & \ge \int_0^{T'} s_{\S}(t)\,dt - T' \hat P\\
    & \ge \int_0^{T'} d^*(t)\,dt - T' \hat P\\
    & = \int_0^T d_e^*(t)\,dt\\
    & = \int_0^T d_e(t)\,dt,
  \end{align*}
  where the second inequality above follows from~\eqref{eq:13}, the
  third inequality follows from the condition~\eqref{eq:11}, and the
  immediately succeeding equality follows from the
  condition~\eqref{eq:12}.  Thus the initial energy in the set of
  stores~$\Se$ is at least as great as the residual demand
  $\int_0^T d_e(t)\,dt$ to be met by those stores.  Since, by
  definition, the stores in the set~$\Se$ are empty at time~$T$, it
  follows that the residual demand process $(d_e(t),\,t\in[0,T])$
  is entirely served by those stores by time~$T$.
\end{proof}


\section{Minimisation of EEU}
\label{sec:minimisation-eeu}

Suppose now that the objective function~$w$ of the optimal scheduling
problem~\Pb\ is given by $w(d)=d$, so that the problem becomes that of
the optimal use of such storage as is available for the minimisation
of EEU.  In the case where the demand process $(d(t),\,t\in[0,T])$ is
deterministic, the optimisation problem~\Pb\ is then formally a linear
programme.  However, the inequality nature of the
constraints~\eqref{eq:1} and~\eqref{eq:2} ensures that this problem
has a particularly simple solution, which we discuss below.  Further
this solution continues to be optimal when the demand process
$(d(t),\,t\in[0,T])$ is stochastic.

In identifying optimal policies in either a deterministic or a
stochastic environment, it is sufficient to consider those which are
\emph{greedy}, i.e.\ those in which, as previously, at each successive
time~$t$, the total rate $\sum_{i\in\S} r_i(t)$ at which energy is
served by the stores in the set~$\S$ is given by~\eqref{eq:7}, so that
any residual demand at the time~$t$ is reduced as far as possible.
Essentially the reason for this is that, when the objective is the
minimisation of unserved energy, nothing is to be gained by
withholding for later possible use energy which could have been used
to reduce further residual demand at any time~$t$; further, when the
demand process $(d(t),\,t\in[0,T])$ is stochastic, there is the risk
that any such withheld energy may turn out to be not needed
subsequently.  It follows that, in the case of a \emph{single} store,
it is clearly always optimal to follow the policy of using the energy
of the store as quickly as possible without wasting it---see also
Edwards \emph{et al.}~\cite{ESDT} for further analysis of this case.

In the case of multiple stores Edwards \emph{et al.}~\cite{ESDT}
consider various heuristic greedy policies, but show that none of
these is in general optimal.  However, the above simple and obvious
result for a single store is a special case of the following more
general result for multiple stores.

\begin{theorem}
  \label{thm:min_eeu}
  In either a deterministic or a stochastic environment, the
  minimisation of EEU over the period~$[0,T]$ is achieved by the
  unique greedy \ltf\ policy.
\end{theorem}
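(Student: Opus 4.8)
For the deterministic case the claim is that the greedy \ltf\ policy maximises the cumulative served energy, and in particular its value at time~$T$, over all policies. I would begin by reformulating the objective and reducing the stochastic problem to the deterministic one. Since $\int_0^T d(t)\,dt$ does not depend on the policy, minimising~\eqref{eq:4} with $w(d)=d$ is equivalent to maximising the total served energy $\E\int_0^T\sum_{i\in\S}\rt_i(t)\,dt$. The greedy \ltf\ policy is moreover \emph{non-anticipating}: by~\eqref{eq:7} and the definition of the \ltf\ prioritisation, the rates $\rt_{\S}(t)$ depend only on the demand and the residual energies up to time~$t$. Hence, if I can show that for every fixed deterministic demand path the greedy \ltf\ policy serves at least as much energy as any other policy, then for an arbitrary adapted policy~$\pi$ the energy served under~$\pi$ is, path by path, at most that served under greedy \ltf; taking expectations then gives the result in the stochastic environment. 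It therefore suffices to treat the deterministic case.

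In the deterministic case I would show that greedy \ltf\ maximises the cumulative served energy $A(t)=\int_0^t\sum_{i\in\S}\rt_i(u)\,du$ simultaneously for every $t\in[0,T]$. Recall that, as argued in the discussion preceding the theorem, it is enough to restrict attention to \emph{greedy} policies, namely those serving total rate $\min(d(t),s(t))$ at each time~$t$, where $s(t)=\sum_{i\colon E_i(t)>0}P_i$ is the available power; withholding serviceable energy can never help, since by~\eqref{eq:3} no energy is wasted and any energy not served now remains available later. The essential remaining point is that among greedy policies the \ltf\ rule maximises the available power $s(t)$ at every time, and hence maximises the served rate $\min(d(t),s(t))$ wherever it is power-constrained.

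To establish this I would compare \ltf\ with an arbitrary greedy policy by an interchange argument. Consider the first time at which a competing greedy policy draws energy from a store of shorter residual time while a store of strictly longer residual time is available but not used at its maximal rate; modify the policy so that, at that time, the same total rate is instead supplied preferentially from the longer-residual-time store, as \ltf\ prescribes. This leaves the total served rate, and hence $A$, unchanged up to that time, while it keeps the shorter-residual-time---and therefore higher-power---store fuller, so that it empties no earlier; using the ordering-preservation property~\eqref{eq:6} one checks that $s(\cdot)$ is not decreased at any subsequent time, and so neither are the served rate $\min(d(\cdot),s(\cdot))$ and $A(T)$. Iterating, and passing to the limit over such interchanges, transforms any greedy policy into the greedy \ltf\ policy without ever decreasing the served energy.

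The step I expect to be the main obstacle is this power-domination claim: verifying rigorously that redirecting supply towards the longer-residual-time store never reduces the total available power at any later time. The delicate feature is that emptying the low-power, long-residual-time store sooner removes a little power early, whereas preserving the high-power, short-residual-time store removes more power only later, and one must show that the net effect on $s(\cdot)$---and hence on $\min(d(\cdot),s(\cdot))$ integrated to~$T$---is always favourable. The ordering-preservation property~\eqref{eq:6}, together with the prioritisation identity~\eqref{eq:8} for the subset~$\Sne$, are precisely the tools I would use to control this comparison and make the interchange argument precise.
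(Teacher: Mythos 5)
Your reduction steps are sound: minimising EEU is equivalent to maximising served energy, and path-by-path domination of the greedy \ltf\ policy over every deterministic policy would indeed give the stochastic case by taking expectations, since the greedy \ltf\ policy is non-anticipating. But the heart of your argument --- the ``power-domination'' claim that redirecting supply towards the longer-residual-time store never decreases the available power $s(\cdot)$ at any later time, and that iterating such interchanges converges to the \ltf\ policy --- is exactly what you leave unproven, and the tools you cite do not close it. Property~\eqref{eq:6} is established only for \ltf\ policies; after an interchange your competing policy is no longer an \ltf\ policy, so~\eqref{eq:6} says nothing about its subsequent evolution (indeed, once the interchange alters the stores' residual energies, the competitor's future allocation is not even well defined without further specification). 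Similarly,~\eqref{eq:8} is a statement about the greedy \ltf\ policy itself, not a comparison device. There are two further unaddressed points: in continuous time, ``the first time the policies differ'' and ``iterating and passing to the limit over such interchanges'' require genuine formalisation; and your preliminary restriction to greedy policies (``withholding serviceable energy can never help'') is asserted rather than proved --- it is plausible, and the paper's discussion motivates it informally, but it needs its own argument since serving now rather than later changes which stores remain available.

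The paper's proof avoids all of these difficulties by a direct verification argument that bounds \emph{every} feasible policy at once, greedy or not, anticipating or not. Run the greedy \ltf\ policy on a given realisation of demand and let $\Se$ and $\Sne$ be as in Section~\ref{sec:stor-demand-prof}. By~\eqref{eq:8} the stores in $\Sne$ collectively serve $\min\bigl(d(t),\sum_{i\in\Sne}P_i\bigr)$ at every time~$t$, while the stores in $\Se$ empty by time~$T$. For any feasible policy, the rate constraints together with~\eqref{eq:3} give $\sum_{i\in\Sne}r_i(t)\le\min\bigl(d(t),\sum_{i\in\Sne}P_i\bigr)$ at each~$t$, and the capacity constraints give $\int_0^T\sum_{i\in\Se}r_i(t)\,dt\le\sum_{i\in\Se}E_i$; hence no policy can serve more energy on that realisation than the greedy \ltf\ policy does, and taking expectations settles the stochastic case. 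If you wish to keep your interchange approach, be aware that the missing power-domination lemma is where essentially all of the work lies; the decomposition into $\Se$ and $\Sne$ is the idea that lets the paper bypass it entirely.
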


\begin{proof}
  Recall from Section~\ref{sec:stor-demand-prof} that the unique
  greedy \ltf\ policy may be followed in either a deterministic or a
  stochastic environment.  As in the proof of Theorem~\ref{thm:nsc},
  it follows from~\eqref{eq:8} that at each time~$t\in[0,T]$, the
  total rate at which energy is served by the stores in the set~$\Sne$
  (which is defined in Section~\ref{sec:stor-demand-prof} and which is
  random when the demand process $(d(t),\,t\in[0,T])$ is stochastic)
  is then given by $\min(d(t),\sum_{i\in\Sne}P_i)$.  Hence, for every
  realisation of the demand process $(d(t),\,t\in[0,T])$, the stores
  in the set~$\Sne$ make the maximum contribution of which they are
  capable to the reduction of unserved energy over the period $[0,T]$.
  Again under the greedy \ltf\ policy, the stores in the complementary
  set~$\Se$ are all empty at time~$T$ and hence also make the maximum
  contribution of which they are capable to the reduction of unserved
  energy over the period $[0,T]$.  Thus collectively the stores in the
  set~$\S$ minimise EEU over the period~$[0,T]$.
\end{proof}

\begin{remark}
  In the case where the demand process $(d(t),\,t\in[0,T])$ is
  deterministic and hence assumed known, the greedy \ltf\ policy is
  typically not the only policy which minimises EEU.  For example, the
  greedy \ltf\ policy might be applied in reverse time to also
  minimise EEU.  Other policies may also be available in a
  deterministic environment---see the further discussion of
  Section~\ref{sec:minim-weight-eeu-1}.  However, such alternatives
  require the demand process to be known in advance and hence are
  typically not available when the latter is stochastic.
\end{remark}

\begin{remark}
  Observe that Theorem~\ref{thm:min_eeu} gives a very simple proof of
  the result of~\cite{EAT}, discussed in
  Section~\ref{sec:stor-demand-prof}, that any demand process
  $(d(t),\,t\in[0,T])$ which can be entirely satisfied by a given set
  of stores (so that the corresponding EEU is zero) can be satisfied
  by supplying energy via the corresponding \ltf\ policy.
\end{remark}

\section{Contributions of individual stores to EEU minimisation}
\label{sec:contr-indiv-stor}

In applications it is often necessary to determine the contribution
$\eeu(\S) - \eeu(\S\cup\{i\})$ to the overall reduction in EEU made by
the addition of any further store~$i$ to the set~$\S$.  This
contribution is readily determined, via, e.g.\ the use of the greedy
\ltf\ policy.  (For example, arguments entirely analogous to those
used in Section~\ref{sec:stor-demand-prof} show that if, under the
above policy, the additional store~$i$ is emptied over the time
interval $[0,T]$, then over that time interval the stores in the
original set~$\S$ contribute the same energy as previously, and so the
reduction in EEU achieved by the additional store~$i$ is
precisely~$E_i$.)

However, of particular interest in energy applications is the
determination of the \emph{equivalent firm capacity} (EFC) of any such
additional store~$i$ to be added to the set~$\S$.  This is now used,
for example in Great Britain, in determining the relative values of
different stores and other forms of capacity-providing resource within
electricity capacity markets---see~\cite{NGECR18}.  A formal
definition of EFC is given, for example, in~\cite{ZD2011}.  For
present purposes, the EFC~$\efc_i$ of any such store~$i$ may be
defined as that level of \emph{firm capacity} (ability to supply
energy at any rate up to a given constant) which, if added to the
set~$\S$ instead of the store~$i$, would achieve the same reduction in
minimised EEU.  We consider the case where the contributions of
individual stores to the overall reduction in unserved energy are
marginal, i.e.\ relatively small.  This is commonly the situation in
electricity capacity markets---see~\cite{Ofgem2018}.

In order to determine the EFC of storage resources, it is therefore
necessary to understand the change in minimised EEU which occurs when
the set of stores is supplemented by a small amount of firm capacity
as above.  Hence, for any $z\ge0$, let $\eeu(\S,z)$ denote the
minimised EEU achieved by the use of the set of stores~$\S$
supplemented by firm capacity equal to~$z$ (note that the latter makes
the same contribution as a store able to supply energy at rate~$z$ and
with very large or unlimited capacity).  The function $\eeu(\S,z)$ is
easily seen to be convex in~$z$; let $\eeu'(\S)$ denote its (right)
derivative with respect to~$z$ at $z=0$.  For any $\S'\subseteq\S$
define $\lole(\S')$ to be the \emph{loss-of-load expectation}
(see~\cite{BillAll}, and also \cite{DECCAnnexC} for a description of
the use of this metric in GB capacity adequacy analysis)~associated
with the minimised EEU achieved by (the sole use of) the set of
stores~$\S'$, \emph{when the stores in this set are scheduled via the
  use of the greedy \ltf\ policy}.  This is the expectation of the
\emph{loss-of-load} duration, i.e.\ the total length of time for which
the use of the set of stores~$\S'$ in accordance with the greedy \ltf\
policy fails to meet fully the demand process $(d(t),\,t\in[0,T])$.
We allow that the set~$\S'$ may be random, as is the case for the
set~$\Sne$ defined in Section~\ref{sec:stor-demand-prof} when the
demand process $(d(t),\,t\in[0,T])$ is stochastic.  Note also, from
the observation~\eqref{eq:8}, that the greedy \ltf\ policy schedules
the stores within the set~$\Sne$ in the same way whether this set is
considered on its own or as a subset of the entire set of stores~$\S$.
We now have the following result.

\begin{theorem}
  \label{thm:deriv}
  The derivative $\eeu'(\S)$, defined as above, is given by
  \begin{equation}
    \label{eq:14}
    \eeu'(\S) = - \lole(\Sne).
  \end{equation}
\end{theorem}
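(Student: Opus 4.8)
The plan is to reduce the firm-capacity perturbation to a perturbation of the demand process and then to exploit the decomposition $\S=\Sne\cup\Se$ already used in the proofs of Theorems~\ref{thm:nsc} and~\ref{thm:min_eeu}. First I would observe that firm capacity of size~$z$ behaves exactly like an additional store of rate~$z$ and unlimited energy; its residual time $E_i/P_i$ is therefore infinite and remains so for all~$t$, so that under the (optimal) greedy \ltf\ policy guaranteed by Theorem~\ref{thm:min_eeu} it is always the highest-priority resource and is used before any genuine store. Consequently, at each time~$t$ the firm capacity serves $\min(d(t),z)$, and the stores in~$\S$ are left to serve, again via greedy \ltf, the reduced demand $(d(t)-z)^+$. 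Hence $\eeu(\S,z)$ equals the minimised EEU incurred when the set~$\S$ serves the reduced demand process $((d(t)-z)^+,\,t\in[0,T])$.

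Next, for each fixed realisation of the demand process and each small $z\ge0$ I would apply the argument of Theorem~\ref{thm:nsc} to this reduced demand. Writing $\hat P=\sum_{i\in\Sne}P_i$ as before, the observation~\eqref{eq:8} shows that the non-binding stores~$\Sne$ serve $\min((d(t)-z)^+,\hat P)$ at every time, while the binding stores~$\Se$ empty completely and so deliver their full energy $\sum_{i\in\Se}E_i$ irrespective of~$z$. This yields the explicit sample-path expression
\begin{equation*}
  U(z) = \int_0^T \bigl((d(t)-z)^+ - \hat P\bigr)^+\,dt \;-\; \sum_{i\in\Se}E_i
\end{equation*}
for the unserved energy, whose expectation is $\eeu(\S,z)$. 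Differentiating the integral in~$z$ (the subtracted constant plays no role, and the contribution from the moving boundary of the region $\{t:d(t)>\hat P+z\}$ vanishes because the integrand is zero there) gives the right derivative $-\,m(\{t:d(t)>\hat P\})$ at $z=0$. Taking expectations and recognising $\E[m(\{t:d(t)>\hat P\})]$ as the loss-of-load expectation $\lole(\Sne)$ of the set~$\Sne$---which serves $\min(d(t),\hat P)$ and hence is in loss of load exactly when $d(t)>\hat P$---delivers~\eqref{eq:14}.

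I expect the main obstacle to be justifying that the preceding sample-path computation is valid uniformly in a neighbourhood of $z=0$ and that differentiation and expectation may be interchanged. The decomposition into~$\Sne$ and~$\Se$ is defined through the greedy \ltf\ policy and is itself a (random) function of~$z$; I would need to argue that for almost every realisation this partition is locally constant near $z=0$. This uses that membership of~$\Se$ is governed by strict inequalities: a store in~$\Se$ empties strictly before the final loss-of-load time~$T'$, and reducing demand by a small~$z$ only delays emptying, so such a store remains in~$\Se$, while non-binding stores acquire yet more slack and remain in~$\Sne$. Realisations for which some store empties exactly at~$T'$ form an exceptional set that I would argue is null and so does not affect the expectation. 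Finally, the convexity of $z\mapsto\eeu(\S,z)$ makes the difference quotients monotone in~$z$, so the interchange of limit and expectation can be secured by monotone (or dominated) convergence, completing the argument.
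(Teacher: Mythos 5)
Your proposal is correct and is essentially the paper's own argument: firm capacity is treated under the greedy \ltf\ policy as a store of infinite residual time (hence top priority), the decomposition $\S=\Sne\cup\Se$ together with~\eqref{eq:8} shows that the stores in~$\Se$ deliver all their energy with or without the perturbation, so the reduction in unserved energy is $z$ times the duration for which $d(t)>\sum_{i\in\Sne}P_i$, whose expectation is $z\,\lole(\Sne)$. If anything your write-up is more careful than the paper's, which simply asserts that the sets $\Sne$ and $\Se$ ``remain otherwise unchanged'' for small $\delta z$ and takes expectations without comment on the interchange of limit and expectation---precisely the two points you isolate and justify.
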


\begin{proof}
  Suppose that the set~$\S$ is supplemented by firm
  capacity~$\delta z>0$ where $\delta z$ is (infinitesimally) small.
  Under the greedy \ltf\ policy the latter may be considered to be
  scheduled as an additional store with rate~$\delta z$ and no
  capacity constraint.  Since $\delta z$ is small, under the above
  policy, the sets~$\Sne$ and $\Se$ remain otherwise unchanged and the
  stores in the set~$\Sne$ also continue to behave as if they were
  capacity unconstrained and their use continues to be completely
  prioritised over that of the stores in the set~$\Se$.  It follows
  from~\eqref{eq:8} that, in the absence of the stores in the
  set~$\Se$, the reduction in unserved energy caused by the addition
  of the firm capacity~$\delta z$ would be given by~$\delta z$ times
  the total length of time~$t$ within the period~$[0,T]$ such that
  $d(t)>\sum_{i\in\Sne}P_i$, i.e.\ by~$\delta z$ times the
  loss-of-load duration associated with the set~$\Sne$.  Thus, taking
  expectations and noting that no additional problems are caused by
  the fact that in a stochastic environment the set $\Sne$ is random,
  we have
  \begin{equation}
    \label{eq:15}
    \eeu(\Sne,\delta z) = \eeu(\Sne) - \delta z \times \lole(\Sne).
  \end{equation}
  Further, again under the greedy \ltf\ policy and under the addition
  of the firm capacity~$\delta z$, the stores in the remaining
  set~$\Se$ continue to contribute all their energy, so that,
  from~\eqref{eq:15},
  \begin{displaymath}
    \eeu(\S,\delta z) = \eeu(\S) - \delta z \times \lole(\Sne).
  \end{displaymath}
  The claimed result~\eqref{eq:14} now follows on dividing by
  $\delta z$ and letting $\delta z\to0$.
\end{proof}


\begin{remark}
  In an electricity system in which no storage is involved in
  balancing that system (i.e.\ the set $\S$ is empty), the conclusion
  of Theorem~\ref{thm:deriv} reduces to the known result that the
  derivative of any residual EEU with respect to variation by firm
  capacity is equal to the negative of the corresponding LOLE.  This
  result is sometimes used as the basis of an economic derivation of
  the current GB reliability standard---see~\cite{DECCAnnexC}.
\end{remark}

It follows from definitions of $\eeu'(S)$ and equivalent firm capacity
given above that the EFC~$\efc_i$ of any marginal (small) store~$i$
added to the set~$\S$ is given by the solution of
\begin{displaymath}
  \eeu(\S) - \eeu(\S\cup\{i\}) = \eeu'(\S) \times \efc_i.
\end{displaymath}
It now follows from this and from Theorem~\ref{thm:deriv} that this
EFC is given by
\begin{equation}
  \label{eq:16}
  \efc_i = \frac{\eeu(\S) - \eeu(\S\cup\{i\})}{\lole(\Sne)}.
\end{equation}
Both the numerator and the denominator in the expression on the right
side of~\eqref{eq:16} are readily determined in applications---though
in a random environment this will typically require simulation.


\begin{remark}
  Note that the EFC of any store is defined independently of the exact
  way in which the use of any set of stores is scheduled, provided
  that scheduling is such that EEU is minimised.  However, the
  quantity $\lole(\Sne)$ in equation~\eqref{eq:16} is defined in terms
  of the outcome of the greedy \ltf\ policy.  (Other optimal
  scheduling policies are possible and would result in a different
  value of this quantity.)
\end{remark}

Theorem~\ref{thm:deriv} and the consequent result~\eqref{eq:16} have
important consequences in understanding the contribution to be made by
further resource in the presence of existing storage.  As a simple
example, consider a single store---e.g.\ a large pumped storage
facility such as Dinorwig in Great Britain (see~\cite{Din})---which is
used in accordance with the (optimal) greedy policy to cover as far as
possible a single period of shortfall whose length (in the absence of
the store) is given by~$a$.  Thus the store is used at its maximum
rate, subject to energy not being wasted, until such time as the store
is emptied or the shortfall period is finished.  Let the total length
of the residual period of time within the above period during which
there is still strictly positive energy shortfall be given by~$b$,
where necessarily $0\le b\le a$.  Suppose now that a very small amount
of (further) firm capacity~$\delta z$ is made available for further
support of the system.  It follows from Theorem~\ref{thm:deriv} that
the further reduction in EEU achieved by the use of this additional
firm capacity is given by~$a\times\delta z$ in the case where the
capacity constraint of the store is strictly binding ($\S=\Se$---so
that in particular the store is empty at the end of the shortfall
period), and by~$b\times\delta z$ otherwise.  Essentially the reason
for this dichotomy is as follows: (in either case) under the greedy
\ltf\ policy the additional firm capacity makes an energy contribution
$a\times\delta z$ over the original shortfall period; in the case
where the capacity constraint of the store is strictly binding, when
the additional firm capacity is added the use of the store is
rescheduled so that it continues to empty; in the case where the
capacity constraint of the store is \emph{not} strictly binding, under
the additional firm capacity the store becomes less useful than
before.  It now further follows from~\eqref{eq:16} that if, instead of
the addition of further firm capacity, there is added instead a
marginal (i.e.\ small) further store which is able to be completely
utilised over the shortfall period, the EFC of that further store is
\emph{less} in the case where the capacity constraint on the original
store is strictly binding.

\section{Minimisation of weighted EEU}
\label{sec:minim-weight-eeu-1}

We now consider the case where the objective function~$w$ of the optimal
scheduling problem~\Pb\ is a more general convex increasing function
(with $w(0)=0$ as before), so that the problem~\Pb\ is that of the
minimisation of a form of weighted EEU.  As discussed in
Section~\ref{sec:model} this corresponds to the situation where the unit
economic cost of unserved energy is higher at higher levels of unmet
demand.  This is something which is often considered to be the case in
applications (see~\cite{NGECR18}), even if in the absence of storage it
is rarely formally modelled (presumably because there is then relatively
little opportunity to schedule generation so as to target particularly
those times at which demand is highest).

The optimal scheduling problem~\Pb\ is now a nonlinear constrained
optimisation problem.  In its solution it makes sense to concentrate
such storage resources as are available at the times of highest
demand.  Thus in general it is no longer the case that an optimal
policy will be greedy, since at times of low demand, storage resources
may be better withheld for later use at those times at which they may
be used more effectively.  The optimal decision at each successive
time~$t$ now generally depends, in a deterministic environment, on the
entire demand process $(d(t),\,t\in[0,T])$ and, in a stochastic
environment, on the entire distribution of this demand process.

We assume for the present that the demand process $(d(t),\,t\in[0,T])$
is deterministic and known in advance.  Under this assumption, we give
a complete solution of the optimal scheduling problem~\Pb.  It turns
out that this solution is independent of the form of the objective
function~$w$, subject only to its satisfying the given convexity
condition (something which is not true in a stochastic
environment---see Section~\ref{sec:examples-1}).  Thus the given
solution remains a solution when, as in
Section~\ref{sec:minimisation-eeu} the objective function~$w$ is
linear and given by $w(d)=d$ for all $d\ge0$, even though the solution
given here looks very different from the greedy solution which is
given in that section.  Note, however, that the latter is also optimal
in a stochastic environment.

For any $P>0$ and any $k\ge0$, define the functions~$f_{P,k}$ and
$\bar f_{P,k}$ on~$\Rp$ by
\begin{equation}
  \label{eq:17}
  f_{P,k}(d) =
  \begin{cases}
    0,     & \qquad d \le k,\\
    d - k, & \qquad k < d < k + d,\\
    P,     & \qquad d \ge k + P,\\
  \end{cases}
\end{equation}
and 
\begin{equation}
  \label{eq:18}
  \bar f_{P,k}(d) = d - f_{P,k}(d).
\end{equation}
Note that both these functions are nonnegative and (weakly) increasing.

Consider first the case where there is a single store with rate
constraint~$P$ and capacity constraint~$E$.  It is reasonably
clear---and it is formally a special case of
Theorem~\ref{thm:weighted-minimisation} below---that when, as at
present, the function~$w$ is convex the optimal solution to the
problem~\Pb\ is given by serving all demand in excess of some level~$k$,
subject to the rate constraint~$P$, i.e.\ by serving energy at rate
\begin{equation}
  \label{eq:19}
  \hat\rt(t)=f_{P,k}(d(t)), \qquad t \in [0,T].
\end{equation}
Here the constant~$k\ge0$ is chosen as small as possible subject to the
capacity constraint
\begin{equation}
  \label{eq:20}
  \int_0^T \hat\rt(t)\,dt \le E.
\end{equation}
The residual (unserved) demand at each time~$t\in[0,T]$ is then given by
$\bar f_{P,k}(d(t))$.

In the case~$k=0$ the optimality of the above solution is obvious; for
$k>0$ note that then the capacity constraint~\eqref{eq:20} (with the
process $(\hat\rt(t),t\in[0,T])$ defined by~\eqref{eq:19}) is
necessarily satisfied with equality; the process
$(\hat\rt(t),\,t\in[0,T])$ serves as much demand above the level~$k$ as
is possible, and it follows from the convexity of the function~$w$ that
any switching of energy resource from serving demand above the level~$k$
to serving demand below the level~$k$ cannot lead to an improved
solution to~\Pb.

Now consider the case where the set~$\S$ consists of multiple stores.
Here a solution to the problem~$\Pb$ is given by taking these stores in
any order and using each in succession in accordance with the above
solution for a single store.  We formalise this result in
Theorem~\ref{thm:weighted-minimisation} below and provide a formal
proof.

\begin{theorem}
  \label{thm:weighted-minimisation}
  Suppose that the set~$\S$ consists of stores~$i=1,\dots,n$ (where
  these stores may be taken in any order), and that, as usual, each
  store~$i$ has rate constraint~$P_i$ and capacity constraint~$E_i$.
  Then a solution to the optimal scheduling problem~$\Pb$ is given by
  using each store~$i$ in accordance with the rate process
  $(\hat\rt_i(t),t\in[0,T])$, where these rate processes are defined
  inductively in~$i$ by
  \begin{equation}
    \label{eq:21}
    \hat\rt_i(t)=f_{P_i,k_i}(d_{i-1}(t)), \qquad t \in [0,T],
  \end{equation}
  where $k_i\ge0$ is chosen as small as possible subject to the
  capacity constraint
  \begin{equation}
    \label{eq:22}
    \int_0^T \hat\rt_i(t)\,dt \le E_i,
  \end{equation}
  and where, for each $t\in[0,T]$, the quantity
  $d_{i-1}(t)=d(t)-\sum_{j=1}^{i-1}\hat\rt_j(t)$ is the residual
  demand at time~$t$ after the use of the stores $j=1,\dots,i-1$ (with
  $d_0(t)$ defined to be equal to $d(t)$ for all $t\in[0,T]$).
  Further the total rate process
  $(\sum_{i\in\S}\hat\rt_i(t),t\in[0,T])$ is independent of the order
  in which the stores within the set~$\S$ are taken in the above
  construction.
\end{theorem}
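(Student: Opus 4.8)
The plan is to reduce the (deterministic) weighted problem to a whole family of ordinary EEU problems, one for each demand level, and then to exploit the fact that the proposed policy serves demand \emph{from the top}. I would first note that the policy is feasible: each $\hat\rt_i(t)=f_{P_i,k_i}(d_{i-1}(t))$ satisfies $0\le\hat\rt_i(t)\le P_i$ by the form of $f_{P_i,k_i}$, the choice of $k_i$ enforces the capacity constraint~\eqref{eq:22}, and $\sum_i\hat\rt_i(t)\le d(t)$ because the stores act in succession on residual demand. The structural observation I would record for later use is that, by~\eqref{eq:21} and~\eqref{eq:18}, each $\hat\rt_i(t)$ is a fixed nondecreasing function of $d(t)$ alone; writing $\hat R(t)=\sum_i\hat\rt_i(t)$ and $\hat u(t)=d(t)-\hat R(t)$ for the residual, this gives $\hat R(t)=G(d(t))$ and $\hat u(t)=\bar G(d(t))$ for nondecreasing functions $G$ and $\bar G=\mathrm{id}-G$, with the thresholds $k_i$ determined purely by the distributions of the successive residual demands.

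Next I would linearise the convex objective by levels. Since $w(0)=0$ and $w$ is convex increasing, any residual process $u$ satisfies $\int_0^T w(u(t))\,dt=\int_0^\infty w'(c)\,L_u(c)\,dc$, where $L_u(c)=m(\{t:u(t)>c\})$, and $\int_0^T(u(t)-c)^+\,dt=\int_c^\infty L_u(\ell)\,d\ell$. As $w$ ranges over admissible objectives $w'$ ranges over all nonnegative nondecreasing functions, so it suffices to establish the sharper, $w$-free statement that the construction minimises $\int_0^T(u(t)-c)^+\,dt$ \emph{simultaneously for every level} $c\ge0$. For fixed $c$ this quantity is exactly the unserved energy of the modified demand process $(d(t)-c)^+$, whose minimum over all feasible policies is attained, by Theorem~\ref{thm:min_eeu}, by any policy serving the maximum possible energy of $(d-c)^+$; that maximum is an intrinsic quantity fixed by the stores and $(d-c)^+$ through Theorem~\ref{thm:nsc}.

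The crux, and the step I expect to be the main obstacle, is to show that the single policy of the construction serves, \emph{for every level $c$ at once}, the maximum energy of $(d-c)^+$. This is exactly where serving from the top is used: because each $f_{P_i,k_i}$ shaves the highest available residual demand, the construction never devotes rate to demand below a level $c$ while demand above $c$ remains unserved, so the induced service is an energy-maximal schedule for $(d-c)^+$. To make this precise I would argue as in the proof of Theorem~\ref{thm:nsc}, relative to the upper level set $\{t:d(t)>c\}$: partition the stores into those whose capacity is exhausted on this set and those that remain rate-bound there, show the construction empties exactly the former while the latter collectively serve at the aggregate rate prescribed by~\eqref{eq:8} for $(d-c)^+$, and then close with the cumulative-energy inequality comparing $\int_0^t s_{\S}$ against the residual demand profile of $(d-c)^+$. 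Carrying this bookkeeping \emph{uniformly} in $c$ is the technical heart of the proof.

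Finally, order-independence follows as a corollary rather than needing a separate exchange argument. The maximum servable energy of $(d-c)^+$ makes no reference to the order in which the stores are taken, so by the preceding step $c\mapsto\int_0^T(\hat u(t)-c)^+\,dt$ is order-independent for every $c$; differentiating in $c$ recovers $L_{\hat u}$, whence the entire distribution of $\hat u$ is order-independent, and therefore so is its nonincreasing rearrangement, which (as $\bar G$ is nondecreasing) equals $\bar G(d^*)$ for the demand profile $d^*$. Since $d^*$ is intrinsic to $d$, this determines the nondecreasing function $\bar G$ on the range of the demand, and hence the total rate process $\sum_i\hat\rt_i(\cdot)=d(\cdot)-\bar G(d(\cdot))$ is the same for every ordering of the stores, as claimed.
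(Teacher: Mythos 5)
Your overall architecture is attractive and genuinely different from the paper's. The layer-cake reduction is valid: since $w(u)=\int_0^\infty w'(c)\mathbf{1}\{u>c\}\,dc$ with $w'$ nonnegative and nondecreasing, minimising $\int_0^T w(u(t))\,dt$ for every admissible $w$ does follow (by Fubini and monotonicity of $w'$) from minimising $\int_0^T(u(t)-c)^+\,dt$ \emph{simultaneously} for every level $c\ge0$; and your route to order-independence (the level values are intrinsic, hence the distribution of the residual is order-independent, hence so is its monotone rearrangement $\bar G(d^*)$ and therefore $\bar G$ itself on the relevant range) is also sound in outline. The paper never passes to level sets: it runs a direct complementary-slackness argument, defining $k^*_i$ as in \eqref{eq:24} and dual prices $\lambda_i=w'(k^*_i)$, establishing the implications \eqref{eq:26}, and combining the supporting-hyperplane inequality \eqref{eq:27} with the capacity conditions \eqref{eq:28}--\eqref{eq:29}; order-independence is then obtained in one line by taking $w$ strictly convex, so that the optimal total rate process is unique.

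However, there is a genuine gap at exactly the step you yourself flag as the ``technical heart'': the claim that the constructed policy is, for every $c$ at once, energy-maximal for the truncated demand $((d(t)-c)^+,\,t\in[0,T])$. The justification offered---each $f_{P_i,k_i}$ ``shaves the highest available residual demand,'' so the construction ``never devotes rate to demand below $c$ while demand above $c$ remains unserved,'' hence is energy-maximal---is a non sequitur, because $(u(t)-c)^+$ depends only on the \emph{total} amount served at time $t$: the service of \emph{any} feasible policy can be relabelled as coming from the top of the demand column, so this property distinguishes nothing. Concretely, the greedy \ltf\ policy is ``top-serving'' in your sense yet fails your level claim: with one store ($P=1$, $E=1$), $T=2$, $d=2$ on $[0,1)$ and $d=1.5$ on $[1,2]$, greedy leaves residual $1$ then $1.5$, so $\int_0^T(u(t)-1.2)^+\,dt=0.3$, whereas the threshold policy \eqref{eq:19} (with $k=1.25$) leaves residual $1.25$ throughout and achieves the optimum $0.1$. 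What actually drives the result is the \emph{time-constant threshold} structure, each $k_i$ minimal subject to \eqref{eq:22}, together with careful tracking of where each store's served band sits relative to $c$ after the coordinate changes between the successive residual processes $d_{i-1}$---precisely the role played in the paper by the quantities $k^*_i$ and the case analysis leading to \eqref{eq:26}. Your proposed completion (partition the stores into capacity-exhausted and rate-bound on the upper level set, ``uniformly in $c$'') is a plan rather than a proof; and since your simultaneous-level claim is in fact \emph{equivalent} to the theorem (apply the theorem with $w=(\cdot-c)^+$, which is convex, increasing and vanishes at $0$), deferring it defers all of the difficulty.
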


\begin{proof}
  For each $i=1,\dots,n$, the residual demand process
  $(d_i(t),t\in[0,T])$, corresponding to the demand remaining to be
  served after the use of the stores $j=1,\dots,i$ as in the statement
  of the theorem, is given by
  \begin{equation}
    \label{eq:23}
    d_i(t) = \bar f_{P_i,k_i}(d_{i-1}(t)), \quad t \in [0,T].
  \end{equation}
  Define also
  \begin{equation}
    \label{eq:24}
    k^*_i = \bar f_{P_n,k_n} \dots \bar f_{P_{i+1},k_{i+1}} (k_i),
    \quad i = 1,\dots,n-1,
    \qquad
    k^*_n = k_n,
  \end{equation}
  where $fg(k)=f(g(k))$ denotes functional composition.  (Note that,
  for any time~$t$ such that, under the scheduling policy described in
  the statement of the theorem, the residual demand after the use of
  the stores $j=1,\dots,i$ is given by $d_i(t)=k_i$, the corresponding
  residual demand after the use of all~$n$ stores is given by
  $d_n(t)=k^*_i$.)  Observe that, from~\eqref{eq:17}
  and~\eqref{eq:18}, for all $i=1,\dots,n$,
  \begin{equation}
    \label{eq:25}
    0 \le k^*_i \le k_i.
  \end{equation}
  For all~$i$, define $\lambda_i = w'(k^*_i)$, where in the event
  that the function $w$ fails to be differentiable at $k^*_i$ we take
  $w'(k^*_i)$ to be the left derivative (which exists by the
  convexity of the function~$w$) and where we take $w'(0)=0$.
  For all $t\in[0,T]$, define also $\bar\lambda(t) = w'(d_n(t))$.

  For any $t\in[0,T]$ and $i=1,\dots,n$, we make the following
  observations:
  \begin{compactenum}[(i)]
  \item if $d_i(t) > k_i$ then, since
    $d_i(t) = \bar f_{P_i,k_i}(d_{i-1}(t))$ and
    $\hat\rt_i(t) = f_{P_i,k_i}(d_{i-1}(t))$, it follows
    from~\eqref{eq:17} and~\eqref{eq:18} that necessarily
    $d_{i-1}(t) > k_i + P_i$ and so also $\hat\rt_i(t) = P_i$; further,
    by the monotonicity of the
    functions~$\bar f_{P_{i+1},k_{i+1}}, \dots ,\bar f_{P_n,k_n}$, it
    follows from~\eqref{eq:23} and~\eqref{eq:24} that $d_n(t) \ge k^*_i$
    and so also $\bar\lambda(t)\ge\lambda_i$;
  \item if $d_i(t)=k_i$ then, arguing as in~(i), we have
    $0 \le \hat\rt_i(t) \le P_i$; further, again arguing as in~(i),
    $d_n(t) = k^*_i$ and so also $\bar\lambda(t)=\lambda_i$;
  \item if $d_i(t)<k_i$ then, again arguing as in~(i), we have
    $\hat\rt_i(t) = 0$; further, again arguing as in~(i),
    $d_n(t) \le k^*_i$ and so also $\bar\lambda(t) \le \lambda_i$.
  \end{compactenum}
  It follows from the observations (i)--(iii) that, for $t\in[0,T]$ and
  $i=1,\dots,n$,
  \begin{equation}
    \label{eq:26}
    \bar\lambda(t) > \lambda_i \ \Rightarrow \ \hat\rt_i(t) = P_i,
    \qquad
    \bar\lambda(t) < \lambda_i \ \Rightarrow \ \hat\rt_i(t) = 0.
  \end{equation}
  Since, for each $t\in[0,T]$, the constant~$\bar\lambda(t)$ is the
  slope of a supporting hyperplane to the convex function~$w$ at
  $d_n(t) = d(t) - \sum_{i=1}^n\hat\rt_i(t)$, it follows that, for any
  set of rates~$\rt_i(t)$, $i=1,\dots,n$, satisfying the rate
  constraints~\eqref{eq:1},
  \begin{align}
    w\left(d(t) - \sum_{i=1}^n \rt_i(t)\right)
    & \ge w\left(d(t) - \sum_{i=1}^n \hat\rt_i(t)\right)
      + \bar\lambda(t)\sum_{i=1}^n\left(\hat\rt_i(t) - \rt_i(t)\right)
    \nonumber \\
    & \ge w\left(d(t) - \sum_{i=1}^n \hat\rt_i(t)\right)
      + \sum_{i=1}^n \lambda_i \left(\hat\rt_i(t) - \rt_i(t)\right),
    \label{eq:27}
  \end{align}
  where the second line in the above display follows from the
  observations~\eqref{eq:26}.

  Note further that, for any~$i$, from the definition of the
  rates~$\hat\rt_i(t)$ and from~\eqref{eq:25},
  \begin{equation}
    \label{eq:28}
    \int_0^T \hat\rt_i(t)\,dt < E_i
    \ \Rightarrow \ k_i = 0 
    \ \Rightarrow \ k^*_i = 0 
    \ \Rightarrow \ \lambda_i = 0. 
  \end{equation}
  The function $w$ is increasing and so $\lambda_i = w'(k^*_i)
  \ge 0$ for all $i$.  Hence it follows from~\eqref{eq:28} that, 
  for any set of rate functions $(\rt_i(t), t\in[0,T])$, $i=1,\dots,n$,
  satisfying the capacity constraints~\eqref{eq:2},
  \begin{equation}
    \label{eq:29}
    \lambda_i \int_0^T\left(\hat\rt_i(t) - \rt_i(t)\right) dt \ge 0,
    \qquad i = 1,\dots n.
  \end{equation}
  Hence, finally, for any set of rate functions $(\rt_i(t), t\in[0,T])$,
  $i=1,\dots,n$, which are \emph{feasible} for the problem~$\Pb$, i.e.\
  satisfy both the rate constraints~\eqref{eq:1} and capacity
  constraints~\eqref{eq:2}, it follows from~\eqref{eq:27}
  and~\eqref{eq:29} that
  \begin{displaymath}
    \int_0^T w\left(d(t) - \sum_{i=1}^n \rt_i(t)\right) dt
    \ge
    \int_0^T w\left(d(t) - \sum_{i=1}^n \hat\rt_i(t)\right) dt,
  \end{displaymath}
  so that the given rate functions $(\hat\rt_i(t), t\in[0,T])$,
  $i=1,\dots,n$, solve the problem~\Pb\ as required.

  To show that the total rate process
  $(\sum_{i\in\S}\hat\rt_i(t),t\in[0,T])$ is independent of the order
  in which the stores within the set~$\S$ are taken in the
  construction of the theorem, consider any strictly convex
  function~$w$ in the statement of the problem~\Pb.  The objective
  function~\eqref{eq:4} of that problem is then a strictly convex
  function of the total rate process
  $(\sum_{i\in\S}\rt_i(t),t\in[0,T])$ and so is then minimised at a
  unique value of that total rate process.  The latter, however, is
  given by $(\sum_{i\in\S}\hat\rt_i(t),t\in[0,T])$ and so this sum has
  the asserted independence property.
\end{proof}

\begin{remark}
  We observe that it is of course straightforward that the rates
  $(\hat\rt_i(t),t\in[0,T])$ at which the \emph{individual}
  stores~$i\in\S$ contribute in the construction of
  Theorem~\ref{thm:weighted-minimisation} in general do depend on the
  order in which the stores are taken within that construction.
\end{remark}

We now discuss briefly the case where the demand process
$(d(t),\,t\in[0,T])$ is stochastic.  Here the solution of the
problem~\Pb\ (when the ``weighting'' function~$w$ is nonlinear) is
complex and, as remarked above, depends on the probabilistic
distribution of the entire demand process.  Further, unlike in the
case where the demand process is deterministic, this solution is no
longer independent of the choice of the function~$w$---see
Example~\ref{ex:2} below.  Stochastic dynamic programming provides a
possible approach, but this requires that the distribution of the
demand process $(d(t),\,t\in[0,T])$ is sufficiently known.  In
practice it may be desirable to take a more heuristic approach.  For
example, at each time~$t\in[0,T]$ one might make a deterministic
estimate of the future demand process, and then make the optimal
decision, as described above, as to how much energy to serve at that
time on the basis of that estimate; a new estimate of the future
demand process might then be made at each subsequent time and the
amount of energy to be then served correspondingly re-optimised.  Such
\emph{deterministic re-optimisation} or \emph{rolling intrinsic}
techniques are robust and known to work well in stochastic
environments where the underlying probability distributions are
themselves uncertain---see, for example,~\cite{Sec2015} and the
references therein.

\section{Examples}
\label{sec:examples-1}

In this section we give two further examples which between them
illustrate much of the theory of the present paper.

\begin{example}\label{ex:1}
  We give a numerical example which, although simple, is nevertheless
  reasonably realistic in the case of the use of storage to cover what
  might otherwise be a significant period of shortfall in a country
  such as Great Britain.  We take $T=8$ half-hour periods (these being
  the time units used in Great Britain for the initial attempt in the
  balancing of supply and demand ahead of real time) and the demand
  process to be met from storage---after the exhaustion of all
  generation---to be given by $(400,400,400,400,1000,1000,200,200)$ MW
  on these successive half-hour time periods.  This might reasonably
  correspond to a credible period of fairly severe shortfall such as
  might occur during an early evening period of peak demand.
  (However, when as at present most electricity capacity is provided
  by generation, such a period of shortfall to be met by storage would
  be rare.)  Recall from Section~\ref{sec:stor-demand-prof} that any
  set of stores with identical values of $E_i/P_i$ are equivalent to a
  single larger store; a corollary of this is that, in choosing
  examples, there is no loss of generality for illustrative purposes
  in assuming all stores to have the same rate constraint.  Thus
  consider $5$ stores each with $P_i=200$ MW and with $E_i=500$ MWh,
  $400$ MWh, $400$ MWh, $300$ MWh, $200$ MWh for $i=1,2,3,4,5$.  These
  values might be those appropriate to a number of large batteries.
  It is easy to check that the greedy \ltf\ policy empties all the
  stores over the time period~$[0,6]$ (the first 3 hours of the
  shortfall period) and eliminates all the shortfall over that time
  period, serving a total of $1800$ MWh of energy and leaving
  unsatisfied all the shortfall during the period $[6,8]$, thus giving
  a (residual) minimised EEU of $200$ MWh.  However, this is not the
  only policy capable of minimising EEU.  For example, the greedy
  \ltf\ policy applied to the demand process in reverse time
  eliminates all shortfall over the time period $[1,8]$ and none of
  the shortfall occurring during the time period $[0,1]$; however,
  such a ``time-reversed'' policy could not be attempted in a
  stochastic environment (see Section~\ref{sec:minimisation-eeu}).
  Consider also the policy which uses the \ltf\ prioritisation of
  stores to serve as far as possible all shortfall in excess of some
  level~$k$.  It turns out that for $k=50$ MW all shortfall in excess
  of the level~$k$ may be thus served and that the given stores are
  then empty at the final time~$8$, so that this policy again
  minimises EEU.  Arguments analogous to those in
  Section~\ref{sec:minim-weight-eeu-1} show that this policy is also
  optimal when the objective is the minimisation of weighted EEU for
  any convex increasing objective function~$w$ (with $w(0)=0$) as
  described in that section.  (We remark that in general, in the case
  of multiple stores, the solution to the problem of minimising
  weighted EEU does not take such a simple form.)  However,
  implementation of a policy of this form again requires a knowledge
  of the demand process ahead of the times at which decisions need to
  be implemented, and so again cannot be directly implemented in a
  stochastic environment.  Observe also that, in this example and as
  discussed in the example of Section~\ref{sec:contr-indiv-stor}, the
  addition of further marginal firm capacity~$\delta z$ MW reduces EEU
  by a further full $4\times\delta z$ MWh (exactly as if the stores
  were not present), with the consequences discussed in
  Section~\ref{sec:contr-indiv-stor} for the EFC of any further
  storage.

  Finally consider the heuristic greedy policy suggested
  in~\cite{ESDT} in which stores are arranged in some order and, with
  respect to that order, earlier stores are completely prioritised
  over later ones.  It is again easy to check that neither the above
  policy in which the stores are arranged in descending order of
  capacity, nor that in which they are arranged in ascending order of
  capacity, succeeds in emptying all the stores---the remaining stored
  energy at time~$8$ under each of these two policies being
  respectively $100$ MWh and $200$ MWh.  Hence neither of these two
  policies succeeds in minimising EEU.
\end{example}

\begin{example}\label{ex:2}
  We give an very simple, essentially mathematical, example to
  illustrate the additional difficulties when, as in
  Section~\ref{sec:minim-weight-eeu-1}, the objective function~$w$ of
  the optimal scheduling problem~\Pb\ is nonlinear---corresponding to
  the objective being the minimisation of a form of weighted EEU---and
  when additionally the demand process $(d(t),\,t\in[0,T])$ is
  stochastic.  We assume appropriate physical units throughout.  We
  take this objective function to be given by $w(d) = d^p$ for some
  $p\ge1$.  We take $T = 2$ and let the demand process
  $(d(t),\,t\in[0,2])$ be given by $d(t) = 2$ for $t \in [0,1]$ and
  $d(t) = k$ for $t \in [1,2]$ where $k$ is a random variable which is
  uniformly distributed on $[0,4]$.  Finally we consider a single
  store with capacity $E = 2$ and rate constraint $P = 2$.

  In the case $p=1$ the optimisation problem is that of minimising EEU
  and here, by Theorem~\ref{thm:min_eeu}, the optimal solution is
  given by the greedy policy of serving all demand in the time period
  $[0,1]$, thus emptying the store at time~$1$ and so serving no
  demand thereafter.  That this solution is here unique follows since,
  if the store is not emptied by time~$1$, there is then a nonzero
  probability that it may not be possible to empty it by the final
  time~$T=2$.

  We now consider the case $p>1$, corresponding to the problem of
  minimising weighted EEU.  If the random variable~$k$ is replaced by
  its expected value of~$2$ then, as in
  Section~\ref{sec:minim-weight-eeu-1}, for all $p>1$ the optimal
  solution is given by serving energy at a constant rate~$1$
  throughout the entire time period $[0,2]$.  For the original
  stochastic problem, we assume that the constant value~$k$ of the
  demand process over the period $[1,2]$ is not known until time~$1$.
  Arguing as in Section~\ref{sec:minim-weight-eeu-1}, it is
  straightforward to see that the optimal policy is that of serving
  energy at some constant rate~$x\le2$ during the time period $[0,1]$
  and then at constant rate $\min(k,2-x)$ during the time period
  $[1,2]$.  For given~$x$ the objective function (weighted EEU) to be
  minimised is then given by
  \begin{equation}
    \label{eq:30}
    (2 - x)^p + \frac{1}{4}\int_0^4 \max\left(0,(k+x-2)^p\right)\,dk.
  \end{equation}
  The latter quantity is equal to $(2-x)^p+\frac{(x+2)^{p+1}}{4(p+1)}$
  and it is then routine to show that the optimal value of~$x$ is
  unique and is a monotonic decreasing function of~$p$---as might be
  expected---which tends to~$2$ as $p\to1$ (corresponding to the
  earlier case $p=1$ where a greedy policy minimises the risk of
  failing to empty the store by time~$2$) and which tends to~$0$ as
  $p\to\infty$ (corresponding to the very high penalty attached to
  high levels of residual demand when~$p$ is large).  Here the
  solution to the stochastic problem coincides with that given by the
  above deterministic approximation (in which the random variable~$k$
  is replaced by its expected value) only for that value of~$p$ such
  that the optimal solution to the stochastic problem is given by
  $x=1$.  This is approximately $p=1.79$.
\end{example}

\section*{Acknowledgements}
\label{sec:acknowledgements-1}

The authors are grateful to the Isaac Newton Institute for
Mathematical Sciences in Cambridge for their funding and hosting of a
number of most useful workshops to discuss this and other mathematical
problems arising in particular in the consideration of the management
of complex energy systems.  They are further grateful to National Grid
plc for additional discussion on the Great Britain electricity market,
to the Engineering and Physical Sciences Research Council---grant
EP/I017054/1---and to a grant from the Simons Foundation for the
support of the research programme under which the present research is
carried out.

\bibliography{storage_refs_all}
\bibliographystyle{plain}

\end{document}